\newtheorem{theorem}{Theorem}[section]
\newtheorem{corollary}[theorem]{Corollary}
\newtheorem{lemma}[theorem]{Lemma}
\newtheorem{proposition}[theorem]{Proposition}
\newtheorem{question}[theorem]{Question}
\theoremstyle{definition}
\newtheorem{definition}[theorem]{Definition}
\newtheorem{remark}[theorem]{Remark}
\date{}
\begin{document}

\title[Ramsey theory for monochromatically well-connected subsets]{Ramsey theory for monochromatically well-connected subsets}
\author{Jeffrey Bergfalk}

\address{Centro de Ciencas Matem\'{a}ticas\\
UNAM\\
A.P. 61-3, Xangari, Morelia, Michoac\'{a}n\\
58089, M\'{e}xico}

\email{jeffrey@matmor.unam.mx}
\thanks{{\it Date}: \today.\newline
{\it 2010 MSC}: 03E02, 03E55\newline
{\it Key words and phrases.} well-connected, highly connected, Ramsey theory, Mitchell forcing, weakly compact.}

\begin{abstract} We define \emph{well-connectedness}, an order-theoretic notion of largeness whose associated partition relations $\nu\to_{wc}(\mu)_\lambda^2$ formally weaken those of the classical Ramsey relations $\nu\to(\mu)_\lambda^2$. We show that it is consistent that the arrows $\to_{wc}$ and $\to$ are, in infinite contexts, essentially indistinguishable. We then show, in contrast, that in Mitchell's model of the tree property at $\omega_2$, the relation $\omega_2\to_{wc}(\omega_2)_\omega^2$ does hold, and that the consistency strength of this relation holding is precisely a weakly compact cardinal. These investigations may be viewed as augmenting those of \cite{BHS}, the central arrow of which, $\to_{hc}$, is of intermediate strength between $\to_{wc}$ and the Ramsey arrow $\to$.
\end{abstract}
\maketitle

The recent \emph{Ramsey theory for highly connected monochromatic subgraphs} \cite{BHS} introduces a graph-theoretic notion of largeness and studies the associated partition relations, denoted therein by the modified arrow notation $\nu\to_{hc}(\mu)_{\lambda}^2$. Graphs large in this sense are termed \emph{highly connected}, with complete graphs as the most obvious examples. Hence the arrow $\to_{hc}$ weakens the classical Ramsey arrow $\to$, though how much it does so tends to depend on assumptions supplementary to the $\mathsf{ZFC}$ axioms. In particular, it remains an open question at the time of this writing whether it is consistent, modulo large cardinal assumptions, that $\omega_2\to_{hc}(\omega_2)_\omega^2$.

In the following, we describe an order-theoretic notion of largeness termed \emph{well-connectedness}, whose associated partition relations $\to_{wc}$ weaken those of $\to_{hc}$, and hence those of $\to$, yet further. We show that this weakening is mild in the sense that these three sorts of partition relations are consistently identical in infinite settings. We show on the other hand that in Mitchell's model of the tree property at $\omega_2$, the relation $\omega_2\to_{wc}(\omega_2)_\omega^2$ holds while $\omega_2\to_{hc}(\omega_2)_\omega^2$ fails. We conclude by deducing that the consistency strength of the former relation is exactly a weakly compact cardinal.

Aside from the aforementioned arrows, our notations and conventions are standard. By the \emph{size} of a graph we mean the cardinality of its vertex-set, and though for readability we write $\omega$ and $\omega_2$, for example, our interest throughout is in the partition relations of cardinals. We mainly follow \cite{cummings_handbook} in our account of Mitchell's forcing;\footnote{It is perhaps worth noting that Mitchell in \cite[p. 41]{mitchell1} credits the variant of this forcing targeting the tree property to Silver. This is the variant we employ.} readers are directed to Section 5 therein for Easton's Lemma, or for the definition and basic properties of the projection of a forcing poset. Add$(\kappa,\lambda)$ denotes the usual forcing poset for adding $\lambda$ many subsets of $\kappa$, in which conditions all have size less than $\kappa$.

The core notion in \cite{BHS} is the following: 
\begin{definition}\label{highlyconnected}
 A graph $G=(V,E)$ is \emph{highly connected} if it remains connected after the deletion of any fewer than $|V|$ vertices. Write $\nu\to_{hc}(\mu)^2_\lambda$ if and only if for every coloring of the edges of the complete graph on $\nu$ by $\lambda$ many colors there exists some size-$\mu$ monochromatic subgraph which is highly connected.
\end{definition}
The only highly connected graph on a finite set $m$ of vertices is the complete one; hence for any finite $\ell$ and $m$ the relation $\nu\to_{hc}(m)^2_\ell$ corresponds precisely to the classical Ramsey relation $\nu\to(m)^2_\ell$. However, unlike the classical relation $\omega\to(\omega)_\ell^2$, the relation $\omega\to_{hc}(\omega)_\ell^2$ generalizes to the uncountably infinite:
\begin{proposition}[\cite{BHS}] \label{finite} Let $\ell$ be a positive integer. Then the relation $\mu\to_{hc}(\mu)_\ell^2$ holds for any infinite cardinal $\mu$.
\end{proposition}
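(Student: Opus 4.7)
The plan is to proceed by induction on $\ell$, the base case $\ell=1$ being immediate since $K_\mu$ itself is highly connected. For the inductive step, fix an $(\ell+1)$-coloring $c\colon [\mu]^2 \to \{1,\ldots,\ell+1\}$ with color classes $G_1,\ldots,G_{\ell+1}$. If $G_{\ell+1}\restriction V$ is highly connected for some $V\subseteq\mu$ of size $\mu$, we are done in color $\ell+1$; otherwise, the strategy is to extract a $G_{\ell+1}$-independent subset $X\subseteq\mu$ of size $\mu$ and apply the inductive hypothesis to the $\ell$-coloring $c\restriction[X]^2$.

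To produce $X$, iterate cuts. Set $V_0=\mu$. At a successor stage $\alpha+1$, the failure of high connectivity of $G_{\ell+1}\restriction V_\alpha$ yields a partition $V_\alpha = W_\alpha \sqcup V_{\alpha+1} \sqcup B_\alpha$ with $|W_\alpha|<\mu$, with $V_{\alpha+1}$ and $B_\alpha$ both nonempty, with no $G_{\ell+1}$-edges between $V_{\alpha+1}$ and $B_\alpha$, and with $|V_{\alpha+1}|=\mu$. The last condition can be arranged because $V_\alpha\setminus W_\alpha = V_{\alpha+1}\sqcup B_\alpha$ has cardinality $\mu$, and the equation $|V_{\alpha+1}|+|B_\alpha|=\mu$ in cardinal arithmetic forces $\max(|V_{\alpha+1}|,|B_\alpha|)=\mu$; relabel if needed to put the size-$\mu$ side on the left. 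At limit stages $\lambda<\mu$, set $V_\lambda = \bigcap_{\alpha<\lambda} V_\alpha$; for regular $\mu$, $|V_\lambda|=\mu$ holds by the regularity of $\mu$, and the construction runs through $\mu$ stages.

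Since $B_\beta\subseteq V_{\alpha+1}$ whenever $\alpha<\beta<\mu$, there are no $G_{\ell+1}$-edges between distinct $B_\alpha$ and $B_\beta$. Choosing $x_\alpha\in B_\alpha$ gives a $G_{\ell+1}$-independent set $X=\{x_\alpha:\alpha<\mu\}$ of size $\mu$, and the inductive hypothesis applied to $c\restriction[X]^2$ completes the proof. The main obstacle is sustaining the iteration through limit ordinals: for regular $\mu$ this is immediate from regularity, but for singular $\mu$ one needs further care---for example, choosing cuts that minimize $|W_\alpha|+|B_\alpha|$ or working along a cofinal subsequence of length $\mathrm{cf}(\mu)$---to ensure that the cumulative discarded vertices stay of size $<\mu$ across limits of low cofinality.
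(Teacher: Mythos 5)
There is a genuine gap here, and in fact the overall strategy cannot be repaired. The local problem first: your claim that $|V_\lambda|=\mu$ at limit stages ``by the regularity of $\mu$'' is unjustified. Regularity bounds unions of fewer than $\mu$ sets each of size \emph{less than} $\mu$, but the pieces $B_\alpha$ you discard at successor stages are only required to be nonempty and may have size $\mu$ (a cut of $G_{\ell+1}\restriction V_\alpha$ can split it into two pieces of size $\mu$, and you cannot in general shrink $B_\alpha$ without either creating edges into $V_{\alpha+1}$ or inflating $W_\alpha$ to size $\mu$). So already for $\mu=\omega_1$ the intersection $\bigcap_{n<\omega}V_n$ may be countable or empty and the construction dies at stage $\omega$. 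More seriously, the dichotomy your induction rests on --- every graph on $\mu$ vertices either induces a highly connected subgraph on some size-$\mu$ vertex set or has a size-$\mu$ independent set --- is not a theorem of $\mathsf{ZFC}$, so no refinement of the cutting procedure can establish it. If $T$ is a Suslin tree, its comparability graph has no uncountable independent set (independent sets are exactly the antichains of $T$) and no uncountable highly connected subgraph: given such a subgraph on a vertex set $V$, deleting the countable set $V\cap T_{<\gamma}$ must leave $V\cap T_{\geq\gamma}$ connected, which forces it into the cone above a single node $s_\gamma$ of level $\gamma$, and the nodes $s_\gamma$ then form an uncountable branch.

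The paper does not reprove this proposition --- it is quoted from \cite{BHS} --- but the argument there is quite different, handles all $\ell$ colors at once, and needs no induction. Fix a uniform ultrafilter $U$ on $\mu$; for each $\alpha$ exactly one color $i(\alpha)$ satisfies $\{\beta:c(\alpha,\beta)=i(\alpha)\}\in U$, and since there are finitely many colors some $S=\{\alpha:i(\alpha)=i\}$ lies in $U$. Any two vertices of $S$ then have $U$-many, hence $\mu$ many, common $i$-neighbors inside $S$, so $G_i\restriction S$ retains diameter at most $2$ after deleting any fewer than $\mu$ vertices. The moral is that the standard device for producing highly connected sets is ``every two points have many common neighbors,'' not an independent-set versus connected-set dichotomy.
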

The situation for infinitely many colors is more complicated. A variant of the Sierpi\'{n}ski coloring of \cite{sierpinski}, for example, witnesses the following:
\begin{proposition}[\cite{BHS}] \label{sierpinski} Let $\lambda$ and $\mu\geq 2^\lambda$ be infinite cardinals. Then $\mu\not\to_{hc}(\mu)_\lambda^2$.
\end{proposition}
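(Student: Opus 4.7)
The plan is to exhibit, for each $\mu \geq 2^\lambda$, a $\lambda$-coloring $c \colon [\mu]^2 \to \lambda$ with no monochromatic highly connected subgraph of size $\mu$; I describe the case $\mu = 2^\lambda$ in detail, with the general case following by a parallel construction.

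Identify $\mu$ with ${}^\lambda 2$ via a bijection, fix a well-order $<_W$ of $\mu$ of order type $\mu$, and let $<_L$ denote the lexicographic order on ${}^\lambda 2$. For distinct $f, g \in {}^\lambda 2$, set $\Delta(f, g) = \min\{\alpha < \lambda : f(\alpha) \neq g(\alpha)\}$, and let $\epsilon(f, g) \in \{0, 1\}$ be $0$ iff $<_W$ and $<_L$ agree on $\{f, g\}$. Define
\[
c(\{f, g\}) = (\Delta(f, g), \epsilon(f, g)) \in \lambda \times 2 \cong \lambda.
\]

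To verify, fix $X \in [\mu]^\mu$ and a color $(\alpha, \epsilon) \in \lambda \times 2$. Color-$(\alpha, \epsilon)$ edges join only pairs with $\Delta = \alpha$, hence decompose across components $A_\sigma = \{f : f \upharpoonright \alpha = \sigma\}$ for $\sigma \in {}^\alpha 2$; unless $X \subseteq A_\sigma$ for a single $\sigma$, the subgraph $G_{(\alpha, \epsilon)} \upharpoonright X$ is disconnected. Assuming $X \subseteq A_\sigma$ and writing $X_i = X \cap \{f : f(\alpha) = i\}$, the color-$(\alpha, \epsilon)$ edges on $X$ are pairs $\{f, g\}$ with $f \in X_0$, $g \in X_1$, and $f <_W g$ (if $\epsilon = 0$) or $f >_W g$ (if $\epsilon = 1$). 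If either $X_i = \emptyset$ there are no such edges; otherwise, set $v = \min_{<_W}(X_{1-\epsilon})$. The color-$(\alpha, \epsilon)$-neighbors of $v$ in $X$ are contained in $\{u \in X : u <_W v\}$, of cardinality at most the $<_W$-rank of $v$, which is an ordinal strictly below $\mu$. Since a highly connected graph on $\mu$ vertices must have every vertex of degree at least $\mu$ (otherwise its $(<\mu)$-sized neighborhood can be removed to isolate it), $G_{(\alpha, \epsilon)} \upharpoonright X$ is not highly connected.

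For $\mu > 2^\lambda$, the analogous construction identifies $\mu$ with a subset of ${}^\lambda \mu$, and takes $\epsilon(f, g) = 0$ iff $f <_W g$ precisely when $f(\Delta(f,g)) < g(\Delta(f,g))$ in $\mu$. The main obstacle is adapting the low-degree vertex argument to the resulting $\mu$-partite setting: $v$ must now be chosen with locally extremal critical coordinate and small $<_W$-rank, rather than simply being the $<_W$-minimum of one side of a bipartition.
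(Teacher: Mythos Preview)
Your argument for $\mu = 2^\lambda$ is correct and is precisely the Sierpi\'{n}ski-type coloring the paper alludes to; it extends verbatim to any $\mu \leq 2^\lambda$ by replacing the bijection with an injection $\mu \hookrightarrow {}^\lambda 2$.

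The trouble lies entirely in your treatment of $\mu > 2^\lambda$, and here the problem is not with your proof but with the statement: the inequality in the proposition is almost certainly a typo for $\mu \leq 2^\lambda$. Two pieces of internal evidence: the paper's immediate corollary ``In particular, $\omega_1\not\to_{hc}(\omega_1)_\omega^2$'' follows from $\omega_1 \leq 2^\omega$, not from $\omega_1 \geq 2^\omega$; and the paper records that \cite{BHS} builds a model of $2^{\omega_1}\to_{hc}(2^{\omega_1})_\omega^2$, which, since $2^{\omega_1} \geq 2^\omega$ always, would flatly contradict the proposition as written. So the relation $\mu\not\to_{hc}(\mu)_\lambda^2$ can genuinely fail for some $\mu > 2^\lambda$, and your sketched ``$\mu$-partite'' extension cannot be completed in general --- not because your idea is misguided, but because the target claim is false there. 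Concretely, in your $\epsilon = 0$ case the $<_W$-minimum $v$ of $X_{\xi^*}$ may have $\mu$-many neighbors (everything in $X$ with larger $\alpha$-coordinate and larger $<_W$-rank), and no alternative choice of $v$ will rescue this uniformly. Drop the final paragraph, note that an injection suffices for $\mu \leq 2^\lambda$, and you have a complete proof of the intended statement.
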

In particular, $\omega_1\not\to_{hc}(\omega_1)_{\omega}^2$, and $\omega_2$ is the least cardinal $\mu$ for which the relation $\mu\to_{hc}(\mu)_{\omega}^2$ may possibly hold. From the assumptions of the continuum hypothesis and the existence of a weakly compact cardinal, respectively, models are constructed in \cite{BHS} of the relations $\omega_2\to_{hc}(\omega_1)_{\omega}^2$ and $2^{\omega_1}\to_{hc}(2^{\omega_1})_{\omega}^2$. However, it remains an open question at the time of this writing whether the relation $\omega_2\to_{hc}(\omega_2)_{\omega}^2$ is consistent (modulo large cardinal assumptions) with the $\mathsf{ZFC}$ axioms.

We turn now to the following closely related property and relation.
\begin{definition}\label{wellconnected} Given a coloring $c:[\nu]^2\rightarrow\lambda$, say $X\subseteq\nu$ is \emph{well-connected in the color $i$} if for every $\alpha<\beta$ in $X$ there exists a finite path from $\alpha$ to $\beta$, \begin{itemize}
\item all the edges of which are colored $i$, and
\item all the vertices of which are greater than or equal to $\alpha$.
\end{itemize}
Note that we do not require those vertices to lie in $X$. Write $$\nu\rightarrow_{wc}(\mu)_\lambda^2$$ if and only if for every $c:[\nu]^2\rightarrow\lambda$ there exists an $X\subseteq\nu$ of cardinality $\mu$ which is well-connected in some color $i\in\lambda$.
\end{definition}
Observe that the key object $X$ in the above definition does not itself involve a choice of edges. In other words, while Definition \ref{highlyconnected} describes a potential property of graphs (i.e., of subsets of $[\nu]^2$), Definition \ref{wellconnected} describes a potential property of sets $X\subseteq\nu$. Nevertheless, the partition relations associated to these two properties, as well as those of the classical Ramsey arrow, can be, in infinite contexts, essentially indistinguishable.
\begin{theorem}\label{inL}
Suppose that $\mathrm{V}=\mathrm{L}$ and that $\mu$ is a regular cardinal. \begin{itemize}
\item If $\nu$ is finite then $$\nu\to(\mu)^2_\lambda\text{ if and only if }\nu\to_{hc}(\mu)^2_\lambda$$
\item If $\nu$ is infinite and $\lambda$ is finite then $$\nu\to_{hc}(\mu)^2_\lambda\text{ if and only if }\nu\to_{wc}(\mu)^2_\lambda$$
\item If $\nu$ is infinite and $\lambda$ is infinite then $$\nu\to(\mu)^2_\lambda\text{ if and only if }\nu\to_{hc}(\mu)^2_\lambda\text{ if and only if }\nu\to_{wc}(\mu)^2_\lambda$$
\end{itemize}
\end{theorem}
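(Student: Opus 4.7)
The forward directions $\to \Rightarrow \to_{hc} \Rightarrow \to_{wc}$ all hold in $\mathsf{ZFC}$ and account for one half of each bullet. The first is immediate since complete graphs are highly connected. For the second, given a monochromatic color-$i$ highly connected subgraph on a vertex set $X$ of size $\mu$, I would pass to the initial segment $X^* \subseteq X$ of order type $\mu$: for any $\alpha \in X^*$ the set $X \cap \alpha$ has cardinality strictly less than $\mu$, so Definition \ref{highlyconnected} makes $X \cap [\alpha,\nu)$ color-$i$ connected, and any path therein from $\alpha$ to a larger $\beta \in X^*$ uses only vertices $\geq \alpha$, as required.

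Parts 1 and 2 require no further work. Part 1 is the observation, already recorded after Definition \ref{highlyconnected}, that on a finite vertex set highly connected coincides with complete. For Part 2 (infinite $\nu$, finite $\lambda$), Proposition \ref{finite} gives $\nu \to_{hc}(\nu)^2_\lambda$ and hence $\nu \to_{hc}(\mu)^2_\lambda$ for every $\mu \leq \nu$, after which the easy implication just rehearsed supplies $\nu \to_{wc}(\mu)^2_\lambda$; both arrows hold unconditionally, and the hypothesis $V = L$ is dispensable for this case.

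The content lies in Part 3, for which it suffices to prove the contrapositive $\nu \not\to (\mu)^2_\lambda \Rightarrow \nu \not\to_{wc}(\mu)^2_\lambda$ under $V = L$. The key reformulation of Definition \ref{wellconnected} is that $X \subseteq \nu$ is well-connected in color $i$ if and only if for every $\alpha \in X$, the set $X \cap [\alpha,\nu)$ is contained in a single connected component of the color-$i$ subgraph of $c \upharpoonright [\alpha,\nu)^2$. It is therefore enough to build $c:[\nu]^2 \to \lambda$ whose color-$i$ subgraph on each tail $[\alpha,\nu)$ has all connected components of cardinality strictly less than $\mu$: any candidate $X$ of size $\mu$ would then, on setting $\alpha = \min X$, be forced to sit inside a single such component, an absurdity.

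For the construction of such a $c$, I would recurse on $\alpha < \nu$, choosing the colors of new edges incident to $\alpha$ so as to preserve this small-component invariant at every prior level. The principal tools are the combinatorial consequences of $V = L$: a $\square_\kappa$-sequence (in the motivating case $\nu = \kappa^+$ with $\kappa$ below the first weakly compact) is well-suited to guiding colors as statistics of the canonical walk from $\alpha$ down to each of its predecessors, giving the color-$i$ subgraphs a tree-like structure with components of rank $< \mu$; a $\diamondsuit_\nu$-sequence provides the additional flexibility to diagonalize against witnesses $(X,i)$ that might otherwise escape walk-theoretic control. The principal obstacle will be harmonizing these local choices so that the small-component invariant is preserved uniformly at every level $\alpha$ and in every color $i$ simultaneously, for which the global coherence supplied by $L$'s fine structure appears indispensable.
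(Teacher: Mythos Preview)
Parts 1 and 2 are fine (modulo the small slip that $\nu\to_{hc}(\nu)^2_\lambda$ does not literally ``hence'' imply $\nu\to_{hc}(\mu)^2_\lambda$; rather, apply Proposition~\ref{finite} directly to $\mu$ and restrict the coloring).

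The gap is in Part 3. What you have written there is not a proof but a wish: you announce a recursion on $\alpha<\nu$ guided by $\square$ and $\diamondsuit$, concede that the ``principal obstacle'' is unresolved, and appeal vaguely to fine structure. No such general machine converting an arbitrary witness to $\nu\not\to(\mu)^2_\lambda$ into a coloring with small tail-components is known, and the paper does not attempt one. What the paper actually does is entirely different: it \emph{classifies} the triples $(\nu,\mu,\lambda)$ with $\lambda$ infinite into three cases and shows that in each case all three arrows agree for a specific, concrete reason.

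The organizing insight you are missing is that under $\mathrm{GCH}$ the \emph{positive} classical relation $\nu\to(\mu)^2_\lambda$ already holds whenever $\lambda<\mu<\nu$ (Erd\H{o}s--Rado for successor $\mu$, \cite[Theorem 17.1]{erdosetal} for limit $\mu$); this disposes of the bulk of the parameter space at once, since then all three arrows hold by the easy implications. Only two narrow corners remain. When $\lambda<\mu=\nu$, either $\nu$ is weakly compact (all three hold) or in $\mathrm{L}$ there is a $\square(\nu)$-sequence of the kind in Lemma~\ref{square}, which kills $\to_{wc}$ directly. When $\mu\leq\lambda$ and $\nu=\lambda^+$, the $\square_\lambda$-based walk coloring of Lemma~\ref{muplus} kills $\to_{wc}$ directly (and the folklore $2^\lambda\not\to(3)^2_\lambda$ handles the classical arrow). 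No $\diamondsuit$, no recursion, no harmonization problem: each negative instance is witnessed by a single explicit $\square$-derived coloring already on the shelf.
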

The theorem bundles together implications each of which follows from a weaker hypothesis than $\mathrm{V}=\mathrm{L}$; several of these simply hold in $\mathsf{ZFC}$. We list these implications separately in four lemmas. The proof of Theorem \ref{inL} will then consist briefly of applying these lemmas to $\mathrm{L}$.
\begin{lemma}\label{positive} Any positive relation $\nu\to(\mu)^2_\lambda$ implies $\nu\to_{wc}(\mu)^2_\lambda$, which implies $\nu\to_{wc}(\mu)^2_\lambda$ in turn.
\end{lemma}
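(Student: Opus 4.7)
The lemma as displayed contains an apparent typo (the arrow $\to_{wc}$ appears on both sides of the second ``implies''); based on how this lemma is used in the proof of Theorem \ref{inL}, the intended chain is surely $\nu\to(\mu)^2_\lambda \Rightarrow \nu\to_{hc}(\mu)^2_\lambda \Rightarrow \nu\to_{wc}(\mu)^2_\lambda$. I plan to establish these two implications in turn, each in $\mathsf{ZFC}$.

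The first implication is essentially a triviality. Given a coloring $c:[\nu]^2\to\lambda$, the classical relation $\nu\to(\mu)^2_\lambda$ yields a $Y\subseteq\nu$ of size $\mu$ whose induced complete graph is monochromatic. Removing fewer than $|Y|=\mu$ vertices from a complete graph leaves a nonempty complete graph, which is connected; hence $Y$ witnesses $\nu\to_{hc}(\mu)^2_\lambda$.

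For the second implication, the plan is to start with a monochromatic highly connected subgraph $G=(Y,E)$ of size $\mu$ in some color $i$, as provided by $\nu\to_{hc}(\mu)^2_\lambda$, and to cut $Y$ down to an $X$ of size $\mu$ which is well-connected in color $i$. To verify well-connectedness of $X$, for $\alpha<\beta$ in $X$ I need a finite path from $\alpha$ to $\beta$ in $G$ all of whose vertices lie in $\nu\setminus\alpha$; high connectivity of $G$ produces such a path as soon as $|Y\cap\alpha|<\mu$, since deleting $Y\cap\alpha$ from $G$ then leaves a connected subgraph containing both $\alpha$ and $\beta$. To arrange $|Y\cap\alpha|<\mu$ simultaneously for every $\alpha\in X$, I let $\delta$ be the least ordinal with $|Y\cap\delta|=\mu$ and set $X:=Y\cap\delta$; minimality of $\delta$ then gives $|Y\cap\alpha|<\mu$ for every $\alpha\in X$, while $|X|=\mu$ by construction.

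The only real obstacle is this last point: the naive choice $X=Y$ need not work, because $Y$ may have order type larger than $\mu$, and then some $\alpha\in Y$ may satisfy $|Y\cap\alpha|=\mu$, in which case high connectivity gives no immediate information about the residual graph obtained from $G$ by deleting $Y\cap\alpha$. The minimal-$\delta$ device sidesteps this issue and is really the only nontrivial step of the argument.
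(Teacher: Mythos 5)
Your proposal is correct and takes essentially the same route as the paper: the paper also reads the statement as $\nu\to(\mu)^2_\lambda\Rightarrow\nu\to_{hc}(\mu)^2_\lambda\Rightarrow\nu\to_{wc}(\mu)^2_\lambda$, treats the first implication as immediate, and for the second takes $X=\{v_\alpha\mid\alpha<\mu\}$ where $\{v_\alpha\mid\alpha<\beta\}$ is the increasing enumeration of the highly connected monochromatic vertex-set --- which is exactly your $Y\cap\delta$. You simply make explicit the verification (deleting the fewer than $\mu$ vertices of $Y$ below $\alpha$ leaves a connected monochromatic graph) that the paper leaves to the reader.
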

\begin{proof} For the second implication, fix a coloring $c:[\nu]^2\to \lambda$. If $\{v_\alpha\,|\,\alpha<\beta\}\subseteq\nu$ is the increasing enumeration of the vertex-set of a size-$\mu$ graph which is highly connected in the color $i$, then $X=\{v_\alpha\,|\,\alpha<\mu\}$ is well-connected in the color $i$.
\end{proof}
\begin{lemma}\label{gch} The relation $\nu\rightarrow_{wc}(\mu)_\lambda^2$ holds for any infinite $\lambda< \text{cf}(\mu)\leq\mu<\nu$. If also $\mu^\lambda<\nu$, then the relation $\nu\rightarrow_{hc}(\mu)_\lambda^2$ holds as well.
\end{lemma}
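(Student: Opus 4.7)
For the first assertion, the plan is a single-hub argument. Fix $\beta^{*}<\nu$ with $|\beta^{*}|\geq\mu$ (for instance $\beta^{*}=\mu$), and, writing $C_{i}(\alpha)$ for the color-$i$ connected component of $\alpha$ in the graph on $[\alpha,\nu)$, set $P_{i}:=\{\alpha<\beta^{*}:\beta^{*}\in C_{i}(\alpha)\}$. Each $\alpha<\beta^{*}$ lies in $P_{c(\alpha,\beta^{*})}$ via the direct edge $\{\alpha,\beta^{*}\}$, so $\beta^{*}=\bigcup_{i<\lambda}P_{i}$; if each $|P_{i}|<\mu$ then $\lambda<\text{cf}(\mu)$ forces $|\bigcup_{i<\lambda}P_{i}|\leq\lambda\cdot\sup_{i}|P_{i}|<\mu$, contradicting $|\beta^{*}|\geq\mu$. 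Pick $i^{*}$ with $|P_{i^{*}}|\geq\mu$ and set $X:=P_{i^{*}}\cup\{\beta^{*}\}$. For $\alpha<\gamma$ in $X$, concatenating the color-$i^{*}$ path from $\alpha$ to $\beta^{*}$ in $[\alpha,\nu)$ (witnessing $\alpha\in P_{i^{*}}$) with the reverse of the corresponding path from $\gamma$ to $\beta^{*}$ in $[\gamma,\nu)\subseteq[\alpha,\nu)$ produces a walk of the required type; the case $\gamma=\beta^{*}$ is handled by the first path alone.

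For the second assertion, the plan is to upgrade this single-hub construction to a monochromatic biclique $K_{\mu,\mu}$ in color $i^{*}$, using $\mu^{\lambda}<\nu$. Such a biclique is highly connected: removing fewer than $\mu$ vertices leaves each side nonempty, and any two remaining vertices are joined by an edge (direct, if they lie on opposite sides) or by a two-step path through any remaining vertex on the opposite side. After passing, if necessary, to a regular end-segment, assume $\nu=(\mu^{\lambda})^{+}$. To each $\beta\in[\mu,\nu)$ associate the profile $p_{\beta}\in(\mu+1)^{\lambda}$ given by $p_{\beta}(i):=|\{\alpha<\mu:c(\alpha,\beta)=i\}|$. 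The profile space has cardinality at most $\mu^{\lambda}<\nu$, so some $p^{*}$ is realized on a set $B\subseteq[\mu,\nu)$ of size $\nu$; since $\sum_{i<\lambda}p^{*}(i)=\mu$ and $\lambda<\text{cf}(\mu)$, fix $i^{*}$ with $p^{*}(i^{*})=\mu$. Then for each $\beta\in B$ the set $T_{\beta}:=\{\alpha<\mu:c(\alpha,\beta)=i^{*}\}$ is a $\mu$-sized subset of $\mu$.

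It remains to find $T^{*}\in[\mu]^{\mu}$ and $B'\subseteq B$ of size $\mu$ with $T^{*}\subseteq T_{\beta}$ for each $\beta\in B'$; then $T^{*}\cup B'$ supports the desired color-$i^{*}$ copy of $K_{\mu,\mu}$. The plan here is an iterated pigeonhole: at each successor stage $\xi<\mu$, given $B^{(\xi)}\subseteq B$ of size $\nu$ and $T^{(\xi)}\subseteq\mu$ of size ${<}\mu$ with $T^{(\xi)}\subseteq T_{\beta}$ for all $\beta\in B^{(\xi)}$, sort the surviving $\beta$'s by some canonical $\lambda$-sized initial slice of $T_{\beta}\setminus T^{(\xi)}$---of which there are at most $\mu^{\lambda}<\nu$ many---to select a slice $U^{*}_{\xi}$, shrink $B^{(\xi)}$ to those $\beta$ realizing this slice, and adjoin $U^{*}_{\xi}$ to obtain $T^{(\xi+1)}$, preserving $|B^{(\xi+1)}|=\nu$. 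The principal obstacle is maintaining $|B^{(\xi)}|\geq\nu$ through limit stages: arbitrary nested decreasing $\nu$-sized subfamilies of $\nu$ may intersect trivially. Handling this will require the $\lambda$-slices to be chosen adaptively in tandem with a fixed cofinal filtration of $\mu$, so that each accumulated profile at a limit stage is again realized $\nu$-many times---an interplay for which the assumptions $\lambda<\text{cf}(\mu)$ and $\mu^{\lambda}<\nu$ are precisely calibrated.
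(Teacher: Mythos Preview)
Your argument for the first assertion is correct and uses the same single-hub idea as the paper, but it is more elaborate than necessary. The paper simply fixes any $\xi\in\nu\setminus\mu$, applies $\lambda<\mathrm{cf}(\mu)$ to the map $\alpha\mapsto c(\alpha,\xi)$ on $\mu$ to obtain $A\in[\mu]^\mu$ and $i\in\lambda$ with $c(\alpha,\xi)=i$ for all $\alpha\in A$, and observes that $A$ is well-connected in color $i$ via the two-step paths $\alpha\to\xi\to\beta$. Your passage through connected components $C_i(\alpha)$ is valid but buys nothing here, since the direct edges already do the job.

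For the second assertion, the paper does not argue directly at all: it simply invokes Proposition~2.3 of \cite{BHS}. Your biclique strategy is a natural route toward such a result, but as written it is genuinely incomplete. You correctly identify the obstacle---maintaining $|B^{(\xi)}|=\nu$ at limit stages $\xi<\mu$---but the closing sentence does not resolve it. The difficulty is real: the ``profile'' accumulated by stage $\xi$ is the full sequence $\langle U^*_\eta:\eta<\xi\rangle$, and the number of such sequences is bounded by $(\mu^\lambda)^{|\xi|}$, which in general exceeds $\mu^\lambda$ once $|\xi|>\lambda$. So the pigeonhole that works at successor stages does not obviously propagate through limits, and your appeal to ``a fixed cofinal filtration of $\mu$'' and the hypotheses being ``precisely calibrated'' is not an argument. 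If you wish to give a self-contained proof rather than cite \cite{BHS}, you need either a different bookkeeping (so that the limit-stage profile genuinely lives in a set of size $\leq\mu^\lambda$) or a different construction altogether.
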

\begin{proof} Fix a coloring $c:[\nu]^2\to\lambda$. Take then some $\xi\in\nu\backslash\mu$ and an $A\in [\mu]^\mu$ and an $i\in\lambda$ such that $c(\alpha,\xi)=i$ for all $\alpha\in A$. The set $A$ is well-connected in the color $i$. The second assertion is an immediate consequence of Proposition 2.3 of \cite{BHS}.
\end{proof}
\begin{lemma}\label{muplus} Let $\mu$ be an infinite cardinal. Then the negative relations $\mu^{+}\not\to_{hc}(3)^2_\mu$ and $\mu^{+}\not\to_{wc}(\omega_1)^2_\mu$ both hold. If there exists a $\square_\mu$-sequence on $\mu^+$, then the relation $\mu^{+}\not\to_{wc}(3)^2_\mu$ holds as well. 
\end{lemma}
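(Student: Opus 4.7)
The plan is to produce explicit colorings witnessing the three non-arrows.

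For (1) and (2) a single coloring suffices: for each infinite $\beta<\mu^+$ fix a bijection $e_\beta\colon\mu\to\beta$, and for $\alpha<\beta$ set $c(\alpha,\beta)=e_\beta^{-1}(\alpha)$. A monochromatic triangle $\alpha<\beta<\gamma$ in color $\xi$ would force $e_\gamma^{-1}(\alpha)=e_\gamma^{-1}(\beta)=\xi$, contradicting injectivity of $e_\gamma$; this gives (1).

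For (2), the key observation is that any $\xi$-monochromatic path $v_0,\dots,v_n$ with $v_j\geq v_0$ for all $j$ must be strictly increasing. Indeed, at any interior local maximum $v_j$ the two incident edges yield $e_{v_j}^{-1}(v_{j-1})=e_{v_j}^{-1}(v_{j+1})=\xi$ and hence $v_{j-1}=v_{j+1}$, contradicting the simplicity of the path; and $v_0$ cannot be the global maximum, since otherwise the wc constraint $v_k\geq v_0$ collapses the path to a point. Monotonicity then forces $v_k=e_{v_{k+1}}(\xi)$ at every step, so the path is the reversal of a final segment of the ``descent orbit'' $p_\xi(\beta):=\{\beta,F_\xi(\beta),F_\xi^2(\beta),\dots\}$ where $F_\xi(v):=e_v(\xi)<v$. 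This is a strictly decreasing sequence of ordinals, hence finite; so if $X$ were wc in color $\xi$ then $X\cap\beta\subseteq p_\xi(\beta)$ would be finite for every $\beta\in X$, forcing $|X|\leq\omega<\omega_1$.

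For (3), the same analysis pinpoints the obstacle: a wc triangle in color $\xi$ in the above coloring is equivalent to a descent $p_\xi(\gamma)$ of length at least three, and the bijectivity of the $e_\beta$ prevents any uniform ZFC bound on descent lengths. To remedy this I would modify the coloring using the $\square_\mu$-sequence $\langle C_\alpha\rangle$: a natural candidate is to let $e_\alpha\upharpoonright\text{ot}(C_\alpha)$ enumerate $C_\alpha$ in increasing order and enrich the coloring with a secondary $\mu$-valued coordinate such as $\text{ot}(C_\beta\cap\alpha)$, so that the coherence identity $C_\beta=C_\gamma\cap\beta$ for $\beta\in\lim C_\gamma$ forces the second coordinate to strictly drop at the next step of any would-be length-two descent and thereby break monochromaticity before a wc triangle can form. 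The main obstacle is to handle uniformly the configurations in which coherence is silent---in particular, successor-indexed elements of $C_\beta$ and indices $\xi\geq\text{ot}(C_\beta)$---which I expect will require either a careful design of the post-$C_\beta$ enumeration or a direct passage to Todorcevic's walk functions and their subadditivity properties.
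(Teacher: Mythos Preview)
Your arguments for (1) and (2) are correct and use exactly the Erd\H{o}s--Kakutani coloring the paper uses. Your descent-orbit analysis for (2) is in fact a more explicit version of the paper's argument: the paper deduces from acyclicity of each color class that for $\beta<\gamma$ in $X$ and any $\alpha<\beta$ one cannot have both $c(\alpha,\beta)=i$ and $c(\alpha,\gamma)=i$, and concludes that the order-type of $X$ is at most $\omega$; your monotone-path and $F_\xi$-orbit formulation makes the same conclusion transparent. (One trivial quibble: your bijections $e_\beta:\mu\to\beta$ exist only for $\mu\leq\beta<\mu^+$; the paper writes $b_\beta:\beta\to|\beta|$ instead, but this changes nothing of substance.)

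Part (3), however, is not a proof but a plan, and the gap is genuine. The paper does precisely what you anticipate in your closing sentence: it passes to Todorcevic's subadditive $\rho:[\mu^+]^2\to\mu$ defined from the $\square_\mu$-sequence and sets
\[
\varrho(\alpha,\beta)=\bigl(\rho(\alpha,\beta),\ \mathrm{otp}\{\xi\leq\alpha:\rho(\xi,\alpha)\leq\rho(\alpha,\beta)\}\bigr).
\]
The decisive property---which your sketch gestures toward but does not establish---is that every connected component of every $\varrho$-monochromatic subgraph of $[\mu^+]^2$ is a \emph{star} $\{\{\alpha,\beta\}:\beta\in B\}$ with $B\subseteq\mu^+\setminus(\alpha+1)$. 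From this, $\mu^+\not\to_{wc}(3)^2_\mu$ follows at once: in a star centered at $\alpha$, the only path between two leaves $\beta<\gamma$ passes through $\alpha<\beta$, so no pair of leaves is well-connected, and the center together with a leaf cannot be extended downward. Your proposed second coordinate $\mathrm{otp}(C_\beta\cap\alpha)$ is in the right spirit but is not the one that works; the paper's second coordinate measures the size of a $\rho$-ball below $\alpha$, and it is the subadditivity of $\rho$ that forces this quantity to change along any would-be monochromatic path of length two, yielding the star structure. So your instinct to invoke walks and subadditivity is exactly right, but the specific coloring and the verification of its star property still need to be supplied.
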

\begin{proof} The following coloring is due to Erd\H{o}s and Kakutani \cite{erdos-kakutani}: for each $\beta<\mu^+$ fix a bijection $b_\beta:\beta\to |\beta|$. For $\alpha<\beta<\mu^+$ let $c(\alpha,\beta)=b_\beta(\alpha)$. Observe that $c(\alpha,\gamma)\neq c(\beta,\gamma)$ whenever $\alpha<\beta<\gamma<\mu^+$. In consequence, for each $i\in\mu$ the family of $i$-colored edges determines an acyclic graph. It follows that the only highly connected graphs that are monochromatic with respect to $c$ are of size $2$. It follows also that if $X$ is well-connected in the color $i$ then for any $\beta<\gamma$ in $X$ and $\alpha<\beta$, the colors $c(\alpha,\beta)$ and $c(\alpha,\gamma)$ cannot both be $i$. This implies that the connecting path for any such $\beta<\gamma$ in $X$ must fall within the interval $[\beta,\gamma]$, and that the the order-type of $X$ is, in consequence, at most $\omega$.

In \cite[Section 3.4]{todorcevic-walks}, Todorcevic describes a strong variant $\varrho:[\omega_1]^2\to\omega$ of the Erd\H{o}s-Kakutani coloring. The $\varrho$-monochromatic subgraphs of $[\omega_1]^2$ are acyclic in the following strong sense:
\emph{Any connected component of any $\varrho$-monochromatic subgraph of $[\omega_1]^2$ is of the form $\{\{\alpha,\beta\}\,|\,\beta\in B\}$, where $B\subseteq \omega_1\backslash(\alpha+1)$.} It follows immediately that $\omega_1\not\to_{wc}(3)_\omega^2$.

Todorcevic derives the coloring $\varrho$ from the subadditive function $\rho:[\omega_1]^2\to\omega$. The higher-cardinal functions $\rho:[\mu^+]^2\to\mu$ are again subadditive when defined with respect to a $\square_\mu$-sequence \cite[Lemma 7.3.7]{todorcevic-walks}. The italicized assertion of the previous paragraph then again holds, with $\mu^+$ in place of $\omega_1$ and $\varrho:[\mu^+]^2\to\mu\times\mu$ defined by
$$\varrho(\alpha,\beta)=(\rho(\alpha,\beta),\, \text{otp}\{\xi\leq\alpha\,|\,\rho(\xi,\alpha)\leq\rho(\alpha,\beta)\})$$
The verification is exactly as for the case of $\mu^+=\omega_1$ and therefore left to the reader. Such a $\varrho$, in conclusion, witnesses that $\mu^{+}\not\to_{wc}(3)^2_\mu$.
\end{proof}
In particular, just as for the $\to_{hc}$ arrow, $\omega_2$ is the least cardinal $\mu$ for which the relation $\mu\to_{wc}(\mu)_{\omega}^2$ may possibly hold.

By the following, either of the relations $\omega_2\to_{hc}(\omega_2)_\omega^2$ or $\omega_2\to_{wc}(\omega_2)_\omega^2$ entails large cardinal assumptions.
\begin{lemma}\label{square} If there exists a $\square(\mu)$-sequence $\vec{C}$ with the additional property that the set $\{\alpha\in\mu\,|\,\mathrm{otp}(C_\alpha)=\lambda\}$ is stationary in $\mu$, then $\mu\not\to_{wc}(\mu)_\lambda^2$.
\end{lemma}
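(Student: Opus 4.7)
The plan is to mimic the proof of Lemma \ref{muplus}, building a coloring $c:[\mu]^2\to\lambda$ from walks on the $\square(\mu)$-sequence $\vec C$ in the style of Todorcevic \cite[Chap.~6--7]{todorcevic-walks}. At $\beta\in S=\{\alpha:\mathrm{otp}(C_\alpha)=\lambda\}$, the ladder $C_\beta$ has order type exactly $\lambda$, so its enumeration $\{c^\beta_i : i<\lambda\}$ partitions $\beta$ into $\lambda$ half-open intervals $I^\beta_i=[c^\beta_i,c^\beta_{i+1})$; I would set $c(\alpha,\beta)=\mathrm{otp}(C_\beta\cap\alpha)$, which equals $i$ for $\alpha\in I^\beta_i$ and lies in $\lambda$ automatically. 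At other $\beta$, I would extend $c$ via the minimal walk on $\vec C$, tracing back to the first $S$-level encountered, so that the coloring ``sees'' the stationary set $S$ uniformly.

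To show that this coloring witnesses $\mu\not\to_{wc}(\mu)^2_\lambda$, suppose for contradiction that $X\subseteq\mu$ has size $\mu$ and is well-connected in some color $i<\lambda$. By regularity of $\mu$, $X$ is cofinal in $\mu$, hence its closure $\bar X$ is a club and $\bar X\cap S$ is stationary; fix $\beta\in\bar X\cap S$. In the clean case $\beta\in X$, the cofinality of $X\cap\beta$ in $\beta$ supplies some $\alpha\in X\cap\beta$ with $\alpha\geq c^\beta_{i+1}$. Then the last edge $\{v,\beta\}$ of any color-$i$ path from $\alpha$ to $\beta$ with all vertices above $\alpha$ would have to satisfy both $v\geq\alpha\geq c^\beta_{i+1}$ and $c(v,\beta)=i$, i.e.\ $v\in I^\beta_i$, so $v<c^\beta_{i+1}$---a contradiction. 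In the case $\beta\in\bar X\setminus X$, the walk-extended form of $c$ transfers this argument to a $\gamma\in X$ just above $\beta$, via the coherence of $\vec C$: walks from such $\gamma$ to targets $\alpha<\beta$ pass through $\beta$, which then plays the role that $\beta$ itself did above, and the same interval analysis produces the contradiction.

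The main obstacle is precisely this second case, $\beta\in\bar X\setminus X$, and more generally the situation in which $X$ avoids $S$ entirely (e.g.\ $X$ consisting entirely of successor ordinals): the walk-extension of $c$ must be designed so that the interval argument at $\beta$ genuinely transfers to a witnessing $\gamma\in X$ above $\beta$. This is a routine but delicate adaptation of the walks machinery of \cite[\S7.3]{todorcevic-walks} to the $\square(\mu)$-plus-stationarity hypothesis. A cleaner alternative, and the one I would ultimately favour, is to introduce the $\varrho$-variant of Lemma \ref{muplus} directly---i.e.\ pair the above $c$ (or its walk-based $\rho$-analogue) with an auxiliary order-type coordinate---yielding a coloring whose monochromatic components are tree-like, so that well-connected subsets are forced to be chains in a short tree order and the failure of $\mu\to_{wc}(\mu)_\lambda^2$ becomes an acyclicity statement parallel to the italicized assertion in Lemma \ref{muplus}.
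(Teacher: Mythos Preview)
The paper's own proof is a one-sentence deferral: it cites Proposition~15 of \cite{BHS}, where the identical statement with $\to_{hc}$ in place of $\to_{wc}$ is established, and simply observes that that argument applies verbatim to $\to_{wc}$. No construction is reproduced.

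Your plan moves in the right direction---the coloring in \cite{BHS} is indeed built from the $\square(\mu)$-sequence---but as written it has a genuine gap, which you yourself flag but do not close. First, the coloring is not well-defined: for $\beta\notin S$ you propose to ``trace back to the first $S$-level encountered'' along the minimal walk from $\beta$ to $\alpha$, but that walk is finite and need not meet $S$ at all, so no value is assigned. Second, even granting some extension of $c$, the transfer you sketch from $\beta\in\bar X\setminus X$ to a nearby $\gamma\in X$ does not follow from coherence: the $\square(\mu)$ condition gives $C_\gamma\cap\beta=C_\beta$ only when $\beta$ is an accumulation point of $C_\gamma$, and an arbitrary $\gamma\in X$ just above $\beta$ has no reason to satisfy this, so ``walks from such $\gamma$ to targets $\alpha<\beta$ pass through $\beta$'' is unjustified. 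Since, as you note, $X$ may miss $S$ entirely (and indeed your ``clean case'' also silently assumes $\beta$ is a limit point of $X$, which need not hold for $\beta\in X\cap S$), the whole argument rests on this unproven transfer. The $\varrho$-variant of your last paragraph is likewise only a pointer. What is missing is precisely the mechanism---present in the \cite{BHS} argument---that forces the coloring to interact with \emph{every} cofinal $X\subseteq\mu$, not just those that happen to meet $S$.
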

\begin{proof} The above proposition, with $\mu\not\to_{hc}(\mu)_\lambda^2$ in place of $\mu\not\to_{wc}(\mu)_\lambda^2$ in its conclusion, is argued in \cite[Proposition 15]{BHS}. As the reader may verify, that argument applies wholesale to the relation $\to_{wc}$ as well.
\end{proof}
\begin{remark}\label{rmk} As noted in \cite{BHS}, the consistency strength of there not existing a $\square(\omega_2)$-sequence as in Lemma \ref{square} is exactly a Mahlo cardinal. Forthcoming work by Rinot and Lambie-Hanson \cite{K3} reduces the premise of Lemma \ref{square} to the existence of any $\square(\mu)$-sequence whatsoever, and appears to reduce the second premise of Lemma \ref{muplus} from $\square_\mu$ to $\square(\mu^+)$ as well. Recall from \cite{todorcevic-pairs} and \cite{jensen} that a regular uncountable cardinal $\mu$ indexes no $\square(\mu)$-sequence if and only if it is weakly compact in $\mathrm{L}$.\end{remark}
We may now more precisely describe the relations evoked in Theorem \ref{inL}:
\begin{proof}[Proof of Theorem \ref{inL}] The arrow $\to_{*}$ will simultaneously denote the three arrows $\to$ and $\to_{hc}$ and $\to_{wc}$. Throughout, the cardinal $\mu$ should be understood to be regular. We work in $\mathrm{L}$. Our assertions about square sequences existing therein are due essentially to \cite{jensen}. The equivalence of  $\nu\to(\mu)_\lambda^2$ and $\nu\to_{hc}(\mu)_\lambda^2$ 
for finite $\nu$ is definitional, as remarked above. We therefore restrict our attention below to infinite $\nu$. The nontrivial possibilities are the following:

\textbf{Case 1: $\lambda<\mu<\nu\,$}: The relations $\nu\to_{hc}(\mu)_\lambda^2$ and $\nu\to_{wc}(\mu)_\lambda^2$ both hold, by Lemma \ref{gch} and the cardinal arithmetic of $\mathrm{L}$. That same arithmetic will ensure that $\nu\to(\mu)_\lambda^2$ as well, by the Erd\H{o}s-Rado Theorem when $\mu$ is a successor cardinal, and by \cite[Theorem 17.1]{erdosetal} when $\mu$ is a limit cardinal.

\textbf{Case 2: $\lambda<\mu=\nu\,$}: If $\lambda$ is finite then $\nu\to_{hc}(\nu)_\lambda^2$ and $\nu\to_{hc}(\nu)_\lambda^2$, by Proposition \ref{finite} and Lemma \ref{positive}. If $\lambda$ is infinite, then $\nu\not\to_{*}(\nu)_\lambda^2$ for any $\nu$ which is not weakly compact, since there then exists a $\square(\nu)$-sequence as in the premise of Lemma \ref{square}. If $\nu$ is weakly compact, then clearly $\nu\to_{*}(\nu)_\lambda^2$.

\textbf{Case 3:} $\mu\leq\lambda<\nu\,$: If $\nu\neq\lambda^+$ then $\nu\to_{*}(\mu')_\lambda^2$ holds for some $\mu'>\mu$ by Case 1, hence $\nu\to_{*}(\mu)_\lambda^2$ holds as well. If $\nu=\lambda^+$ then $\nu\not\to_{wc}(3)_\lambda^2$ and $\nu\not\to_{hc}(3)_\lambda^2$, since there exists a $\square_\lambda$-sequence as in Lemma \ref{muplus}. The relation $2^\lambda\not\to(3)_\lambda^2$ is $\mathsf{ZFC}$ folklore, hence $\lambda^+\not\to(3)_\lambda^2$ in $\mathrm{L}$ as well.
\end{proof}

In contrast to Theorem \ref{inL} is the following, in which the arrows $\to_{hc}$ and $\to_{wc}$ diverge at the first place they possibly can.

\begin{theorem}\label{Mitchell1} Let $\mathbb{M}$ denote the Mitchell collapse of a weakly compact cardinal $\lambda$ to $\omega_2$. Then $$\omega_2\to_{wc}(\omega_2)_\omega^2\textnormal{ but }\omega_2\not\to_{hc}(\omega_2)_\omega^2$$ in the forcing extension of $V$ by $\mathbb{M}$.
\end{theorem}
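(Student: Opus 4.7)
For the negative part, $\mathbb{M}$ forces $2^{\aleph_0}=\aleph_2$ by a standard analysis of Mitchell's forcing, and so Proposition \ref{sierpinski} applies (with $\mu=\omega_2$, $\lambda=\omega$) to give $\omega_2\not\to_{hc}(\omega_2)^2_\omega$ in $V[\mathbb{M}]$.

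For the positive part, the plan is a lifted-embedding argument using the weak compactness of $\lambda$ in $V$. Fix an $\mathbb{M}$-name $\dot c$ for an arbitrary coloring $[\lambda]^2\to\omega$. Take a transitive $M$ modeling a sufficient fragment of $\mathsf{ZFC}$, of size $\lambda$, with ${}^{<\lambda}M\subseteq M$, containing $\mathbb{M}$, $\dot c$, and the other relevant parameters, and let $j\colon M\to N$ be an elementary embedding witnessing the weak compactness of $\lambda$, with $\operatorname{crit}(j)=\lambda$ and $N$ transitive. The standard factorization $j(\mathbb{M})\cong\mathbb{M}\ast\dot{\mathbb{Q}}$ in $N$, the closure and chain-condition properties of the tail $\dot{\mathbb{Q}}$, and a master-condition argument together permit lifting $j$ to $j^{+}\colon M[G]\to N[G\ast H]$. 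In the lift the coloring extends to $\tilde c\colon [j(\lambda)]^2\to\omega$ with $\tilde c\restriction[\lambda]^2=c$; since $\mathrm{cf}(\lambda)>\omega$ in $N[G\ast H]$, pigeonhole on the map $\alpha\mapsto\tilde c(\alpha,\lambda)$ yields a color $i^{\ast}$ and a cofinal $B\subseteq\lambda$ with $\tilde c(\alpha,\lambda)=i^{\ast}$ throughout $B$, so that $B$ is $\tilde c$-well-connected in color $i^{\ast}$ via the two-step paths $\alpha-\lambda-\beta$.

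The main obstacle, and the final step, is to transfer this into a genuine $X\in V[G]$ of size $\omega_2$ that is $c$-well-connected in color $i^{\ast}$. The witness path uses the auxiliary vertex $\lambda$, which lies outside the domain of $c$, so one cannot take $X=B$ directly. Instead, the elementarity of $j^{+}$ is used to reflect the $N[G\ast H]$-side configuration down to $V[G]$, yielding unboundedly many ``hub'' ordinals $\mu<\omega_2$ admitting cofinal $B_\mu\subseteq\mu$ with $c(\alpha,\mu)=i^{\ast}$ throughout. One then constructs $X$ by recursion of length $\omega_2$, selecting hub ordinals and, at each stage, exhibiting short $i^{\ast}$-colored paths between previously selected hubs via intermediaries drawn from the $B_\mu$'s. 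Ensuring the continued availability of compatible intermediaries requires pairing the in-edge reflection with a symmetric out-edge reflection (applying the pigeonhole step to $\beta\mapsto\tilde c(\lambda,\beta)$ for $\beta\in(\lambda,j(\lambda))$) to produce ``doubly good'' hubs; the careful coordination of the in- and out-edge sides within the inductive construction is the delicate technical core of the proof, and the lifting step is where the weak compactness of $\lambda$ does its essential work.
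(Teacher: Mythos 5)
Your negative half and the opening of your positive half match the paper: $2^{\aleph_0}=\aleph_2$ in the extension kills $\to_{hc}$ via Proposition \ref{sierpinski}, and the lifted embedding together with pigeonhole on $\alpha\mapsto\tilde c(\alpha,\lambda)$ (using that $\mathrm{cf}(\lambda)=\omega_1$ in the final model, by Easton's Lemma) produces a color $i^{*}$ and a cofinal $B\subseteq\lambda$. The gap is in your last step, and it begins with a misdiagnosis of the obstacle. That the witnessing paths pass through the auxiliary vertex $\lambda$ is not a problem: by Lemma \ref{trees} the relation $\alpha\vartriangleleft_{i^*}\beta$ (``$\{\alpha,\beta\}$ is well-connected in color $i^*$'') is a tree order, so $B$ is a chain in $T_{\tilde c}(\vartriangleleft_{i^*})$, and since $T_{\tilde c}(\vartriangleleft_{i^*})\restriction\lambda=j^{+}(T_c(\vartriangleleft_{i^*}))\restriction\lambda=T_c(\vartriangleleft_{i^*})$ by elementarity (as $\mathrm{crit}(j^{+})=\lambda$), any two elements of $B$ are already $c$-well-connected by paths lying below $\lambda$. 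So one \emph{can} essentially take $X$ to be the $\vartriangleleft_{i^*}$-downward closure $b$ of $B$; no hubs below $\lambda$ and no recursion are needed. The genuine obstacle, which your proposal never addresses, is that $B$ and $b$ live in $N[G\ast H]$, not in $V[G]$, and a large well-connected set in the tail extension proves nothing about $V[G]$. The paper resolves this with the approximation property of Mitchell's forcing \cite{mitchell2}: $b\cap x\in M[G]$ for every countable $x\in M[G]$, because for fixed $\xi\in b$ membership of $\alpha<\xi$ in $b$ is equivalent to the $M[G]$-computable statement that $\{\alpha,\xi\}$ is $c$-well-connected in color $i^{*}$; hence $b\in M[G]\subseteq V[G]$. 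This is the one step where the specific structure of Mitchell's poset (a projection of $\mathrm{Add}(\omega,\lambda)$ times an $\omega_1$-closed part) actually does work, and it is absent from your argument.

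Your substitute --- reflecting to obtain unboundedly many hubs $\mu<\lambda$ with cofinal monochromatic in-neighborhoods $B_\mu$ and then recursing for $\omega_2$ steps --- is not carried out and does not obviously go through. To join two distinct hubs $\mu_0<\mu_1$ by an $i^{*}$-colored path with vertices $\geq\mu_0$ you need a common neighbor $\gamma$ with $c(\mu_0,\gamma)=i^{*}$ and $c(\gamma,\mu_1)=i^{*}$; cofinality of $B_{\mu_1}$ in $\mu_1$ supplies the second edge but nothing supplies the first. The ``doubly good hub'' device fares no better: the out-edge pigeonhole is applied to a set of ordinals of cofinality $j(\lambda)$ and need not return the color $i^{*}$, and two unbounded out-neighborhoods need not meet. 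In effect you have replaced the approximation-property step by an unsupported combinatorial construction, and that is where the proof breaks.
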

Critical to the argument of the theorem is the following feature distinguishing $\to_{wc}$ from $\to_{hc}$. 

\begin{lemma}\label{trees} Fix a coloring $c:[\nu]^2\to\lambda$. Let $\alpha\vartriangleleft_i\beta$ if and only if $\alpha<\beta$ and $\{\alpha,\beta\}$ is well-connected in the color $i$. Then the relation $\vartriangleleft_i$ is a tree-ordering of $\nu$, and any branch of the associated tree $T_c(\vartriangleleft_i)$ is well-connected in the color $i$.
\end{lemma}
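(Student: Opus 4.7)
The plan is to check three things in turn: that $\vartriangleleft_i$ is a strict partial order on $\nu$, that the $\vartriangleleft_i$-predecessor set of each vertex is well-ordered (so that $\vartriangleleft_i$ really is a tree-ordering), and finally that every branch of the resulting tree is well-connected in the color $i$. The single observation driving everything is that the vertex lower bound in Definition \ref{wellconnected} is keyed to the \emph{smaller} of the two endpoints, which makes concatenation of witnessing paths behave well.

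For transitivity, given $\alpha \vartriangleleft_i \beta \vartriangleleft_i \gamma$, I would concatenate a witnessing path from $\alpha$ to $\beta$ (vertices $\geq \alpha$) with one from $\beta$ to $\gamma$ (vertices $\geq \beta \geq \alpha$) to obtain a witness for $\alpha \vartriangleleft_i \gamma$; irreflexivity is built into the definition via the strict inequality $\alpha<\beta$. For the tree property itself, suppose $\alpha_1, \alpha_2 \vartriangleleft_i \gamma$ with $\alpha_1 < \alpha_2$; concatenating a witnessing path $\alpha_1\to\gamma$ with the reverse of a witnessing path $\alpha_2\to\gamma$ produces a path from $\alpha_1$ to $\alpha_2$ in color $i$ all of whose vertices are $\geq \alpha_1$, hence $\alpha_1 \vartriangleleft_i \alpha_2$. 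This shows that on the predecessors of any fixed vertex the orders $\vartriangleleft_i$ and $<$ coincide, so these predecessor-sets inherit the well-ordering of $\nu$.

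The branches claim is then immediate: a branch $B$ of $T_c(\vartriangleleft_i)$ is by definition a maximal $\vartriangleleft_i$-chain, so for any two of its elements $\alpha<\beta$ the relation $\alpha\vartriangleleft_i\beta$ holds, supplying by definition a path witnessing well-connectedness. I do not anticipate any substantive obstacle; the lemma is a piece of path-bookkeeping whose clean form relies precisely on the asymmetric lower-bound constraint in Definition \ref{wellconnected}, a dissymmetry absent from the graph-theoretic notion of high-connectedness. This asymmetry is presumably also the reason no analogous tree construction is available for the $\to_{hc}$ arrow, which is consistent with the divergence of the two relations announced in Theorem \ref{Mitchell1}.
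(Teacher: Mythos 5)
Your proposal is correct and follows essentially the same route as the paper: the crux in both is that witnessing paths for $\alpha_1\vartriangleleft_i\gamma$ and $\alpha_2\vartriangleleft_i\gamma$ concatenate (through $\gamma$) into a witness for $\alpha_1\vartriangleleft_i\alpha_2$, because the lower-bound constraint is keyed to the smaller endpoint; the paper states exactly this and declares the rest immediate. Your additional verifications (transitivity, well-ordering of predecessor sets, the branch claim) are just the details the paper leaves to the reader, all handled correctly.
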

\begin{proof} Suppose that paths $p_\alpha$ and $p_\beta$ respectively witness that $\alpha\vartriangleleft_i\gamma$ and $\beta\vartriangleleft_i\gamma$ for some $\alpha<\beta<\gamma<\nu$. Then $p_\alpha\cup p_\beta$ witnesses that $\alpha\vartriangleleft_i\beta$. The rest of the assertion is immediate.
\end{proof}
However, as it is at least not \emph{a priori} evident that any of the trees $T_c(\vartriangleleft_i)$ $(i\in\lambda)$ is a $\nu$-tree (i.e., has levels all of cardinality less than $\nu$), Theorem \ref{Mitchell1} does not immediately follow from the tree property holding at $\nu$. Some more active engagement with Mitchell's argument is necessary.

\begin{proof}[Proof of Theorem \ref{Mitchell1}] It will emerge below that $V^{\mathbb{M}}\vDash\,2^{\omega}=\omega_2$. It will then follow immediately that $V^{\mathbb{M}}\vDash\,\omega_2\not\to_{hc}(\omega_2)^2_\omega$, by Proposition \ref{sierpinski}.

We therefore focus on the argument that $V^{\mathbb{M}}\vDash\,\omega_2\to_{wc}(\omega_2)_\omega^2$. We review along the way the fundamentals of the forcing $\mathbb{M}$. As noted, of the now numerous accounts of $\mathbb{M}$ available, we largely follow those of Cummings in \cite{cummings_handbook} and of Mitchell in \cite{mitchell2}. We begin, in particular, by assuming that $\lambda$ is measurable. The associated elementary embedding $j:V\to M$ with $\text{crit}(j)=\lambda$ appreciably simplifies the argument, which then concludes with the recognition that the reflection properties of a weakly compact cardinal $\lambda$ would have sufficed.

Let $\mathbb{P}_\alpha=\mathrm{Add}(\omega,\alpha)$ and let $\mathbb{P}=\mathbb{P}_\lambda$ and let $\mathbb{F}_\alpha=\mathrm{Add}(\omega_1,1)^{V^{\mathbb{P}_\alpha}}$. The conditions of $\mathbb{M}$ are the pairs $(p,f)$ for which \begin{itemize}
\item $p\in\mathrm{Add}(\omega,\lambda)$,
\item $f$ is a partial function on $\lambda$ with countable support, and
\item $f(\alpha)$ is a $\mathbb{P}_\alpha$-name for a condition in $\mathbb{F}_\alpha$.
\end{itemize}
$\mathbb{M}$ is ordered so that $(q,g)\leq_{\mathbb{M}}(p,f)$ if and only if \begin{itemize}
\item $q\leq_\mathbb{P} p$,
\item $\text{supp}(g)\supseteq\text{supp}(f)$, and
\item for all $\alpha\in\text{supp}(f)$ the restriction $q\!\restriction\!(\omega\times\alpha)$ forces that $g(\alpha)\leq_{\mathbb{F}_\alpha} f(\alpha)$.
\end{itemize}
The poset $\mathbb{M}$ appears in this presentation as a slightly odd two-step forcing, at least sufficiently so that the usual arguments apply to show that $\mathbb{M}$ is $\lambda$-c.c. In consequence,\begin{enumerate}
\item Forcing with $\mathbb{M}$ preserves cardinals $\mu\geq\lambda$.
\end{enumerate}
Consider now $\tilde{\mathbb{F}}:=\{(p,f)\in\mathbb{M}\,|\,p=\varnothing\}$, viewed as a suborder of $\mathbb{M}$. Observe that $\pi:(p,(\varnothing,f))\mapsto(p,f)$ is a projection from $\mathbb{P}\times\tilde{\mathbb{F}}$ to $\mathbb{M}$; observe also that $\mathbb{P}$ forces that $\tilde{\mathbb{F}}$ is $\omega_1$-closed. Hence any countable sequence of ordinals in $V^{\mathbb{M}}$ is in $V^{\mathbb{P}\times\tilde{\mathbb{F}}}$ and hence, by Easton's Lemma, is in $V^\mathbb{P}$. In particular,\begin{enumerate}
\item[(2)] Forcing with $\mathbb{M}$ preserves $\omega_1$.
\end{enumerate}
To see that (1) and (2) together account for all the cardinals in $V^\mathbb{M}$, consider the following alternate presentation of $\mathbb{M}$ as a forcing iteration $\langle \mathbb{S}_\alpha,\dot{\mathbb{T}}_\alpha\,|\,\alpha<\lambda\rangle$ in which, for limit ordinals $\alpha$,
\begin{itemize}
\item $\mathbb{T}_\alpha$ is a $\mathbb{S}_\alpha$-name for $\text{Add}(\omega,1)$, and 
\item $\mathbb{T}_{\alpha+1}$ is a $\mathbb{S}_{\alpha+1}$-name for $\text{Add}(\omega_1,1)$.
\end{itemize}
All other terms of the iteration are trivial. The $\text{Add}(\omega,1)$-iterands take finite supports, while the $\text{Add}(\omega_1,1)$-iterands take countable supports.

This framing invites a more dynamic view: at stages $\omega\cdot\alpha<\lambda$, the forcing $\mathbb{M}$ adds an $\alpha^{th}$ Cohen real to the $\omega\cdot\alpha^{th}$ extension of $V$, then collapses the size of the continuum to $\omega_1$. In the process, each $\alpha<\lambda$ is collapsed to $\omega_1$, hence
$$V^{\mathbb{M}}\vDash 2^{\omega}=\omega_2=\lambda$$
This framing also facilitates the factorization of $j(\mathbb{M})$ in terms of $\mathbb{M}$. Namely, write $\mathbb{M}_\alpha$ for the length-$\alpha$ initial segment of $\mathbb{M}$. Then $\mathbb{M}=j(\mathbb{M})_\lambda$, hence the map $\pi: m\mapsto m\!\restriction\!\lambda$ is a projection $j(\mathbb{M})\to\mathbb{M}$, hence any $j(\mathbb{M})$-generic filter $H$ over $V$ induces an $\mathbb{M}$-generic filter $G=\pi''H$ over $V$. Clearly $j''G\subseteq H$, hence $j:V\to M$ extends to an elementary embedding $k:V[G]\to M[H]$ (see \cite[ Proposition 9.1]{cummings_handbook}). Moreover, we may factor $M[H]$ as $M[G][K]$, where $K$ is generic over $M[G]$ with respect to some $\mathbb{Q}$ close in spirit to $\mathbb{M}_{j(\lambda)}^{M[G]}$. Again by Easton's Lemma, only ``the Cohen part'' of $j(\mathbb{M})$ adds countable sequences of ordinals, hence $M[G][K]\vDash\text{cf}(\lambda)=\omega_1$.

Now fix a coloring $c:[\lambda]^2\to\omega$ in $V[G]$. Since $^\lambda M\subseteq M$ \cite[Theorem 5.7]{kanamori} and $\mathbb{M}$ is $\lambda$-c.c., the coloring $c$ is in $M[G]$ as well. Recall the induced trees $T_c(\vartriangleleft_i)$ of Lemma \ref{trees}; write $T_c(\vartriangleleft_i)\!\restriction\!\alpha$ for the restriction of the tree-ordering $\vartriangleleft_i$ to the ordinals of $\alpha$. Observe that for each $i\in\omega$, $$k(T_c(\vartriangleleft_i))\!\restriction\!\lambda=T_{k(c)}(\vartriangleleft_i)\!\restriction\!\lambda=T_c(\vartriangleleft_i)$$ One of these trees will have a cofinal branch in $M[G][K]$. To see this, consider the function $k(c)(\,\cdot\,,\lambda):\lambda\to\omega$ in $M[G][K]$. Since $\text{cf}(\lambda)=\omega_1$ in $M[G][K]$, the function is constantly $i$ on some cofinal $A\subseteq\lambda$. The $\vartriangleleft_i$-downward closure of $A$ defines a branch $b$ as desired. By the following lemma, $b\in M[G]\subseteq V[G]$. As $c$ was arbitrary, this will complete the proof.
\begin{lemma}[\cite{mitchell2}] Let $\lambda$ be a cardinal and let $b\in M[G][K]$ be a subset of $\lambda$ such that $b\cap x\in M[G]$ for all $x\in ([\lambda]^\omega)^{M[G]}$. Then $b\in M[G]$.
\end{lemma}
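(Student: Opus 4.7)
The lemma is an instance of what is now commonly called Hamkins's $\omega_1$-approximation property, specialized to Mitchell-type extensions. The plan is to exploit the two-step structure inherited by the quotient forcing $\mathbb{Q}$ from $M[G]$ to $M[G][K]$: as in the sketch preceding the lemma, this $\mathbb{Q}$ is close in spirit to $\mathbb{M}_{j(\lambda)}^{M[G]}$, and so admits (at least by projection from a product) a decomposition into a Cohen part $\mathbb{Q}_0$ that is $\omega_1$-Knaster in $M[G]$ and a tail $\mathbb{Q}_1$ that is $\omega_1$-closed over the Cohen extension $M[G][K_0]$.

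The first step is to transfer the approximation hypothesis on $b$ from $M[G]$ up to $M[G][K_0]$: by the $\omega_1$-c.c.\ of $\mathbb{Q}_0$, every $x\in ([\lambda]^\omega)^{M[G][K_0]}$ is covered by some $y\in ([\lambda]^\omega)^{M[G]}$, whence $b\cap x = (b\cap y)\cap x$ lies in $M[G][K_0]$. By Easton's Lemma, $\mathbb{Q}_1$ remains $\omega_1$-closed over $M[G][K_0]$ and in particular adds no new countable subsets of $\lambda$, so the family $([\lambda]^\omega)^{M[G][K_0]}$ is the same in $M[G][K]$ as in $M[G][K_0]$.

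The core claim is that $b\in M[G][K_0]$. This step is delicate, since $\omega_1$-closure alone does not suffice: $\mathrm{Add}(\omega_1,1)$ is an $\omega_1$-closed forcing adding a subset of $\omega_1$ approximated by its countable ground-model restrictions and yet not in the ground model. What rescues the argument here is the specific shape of $\mathbb{Q}_1$-conditions as names over the Cohen extension, which constrains the possible fusions and prevents the $2^\omega$ many mutually incompatible lower bounds that would be needed to manufacture a genuinely new subset of $\lambda$. I would follow Mitchell's own fusion argument from \cite{mitchell2}, taking care throughout that the countable sets witnessing each branching decision lie in $M[G]$ — available because of the covering property provided by the Cohen factor — so that the approximation hypothesis can be directly applied to derive a contradiction.

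The final descent from $M[G][K_0]$ down to $M[G]$ proceeds by a standard nice-name argument using the $\omega_1$-Knaster property of $\mathbb{Q}_0$ together with the original approximation hypothesis of $b$ over $M[G]$. The principal obstacle in this plan is the third step, requiring a careful engagement with the name structure of $\mathbb{Q}_1$-conditions to rule out the $\mathrm{Add}(\omega_1,1)$-type counterexamples that would otherwise be available to any sufficiently generic $\omega_1$-closed forcing.
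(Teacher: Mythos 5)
First, note that the paper does not prove this lemma at all: it is imported verbatim from Mitchell's \emph{On the Hamkins approximation property} \cite{mitchell2}, so there is no internal argument to compare yours against. Your proposal correctly identifies the statement as an instance of the $\omega_1$-approximation property for the quotient taking $M[G]$ to $M[G][K]$, and your observation that $\omega_1$-closure alone cannot suffice (the $\mathrm{Add}(\omega_1,1)$-generic is approximated by all of its countable ground-model traces without lying in the ground model) is exactly the right thing to worry about.

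That said, the proposal has two genuine gaps. The first is circularity: the step you yourself flag as the crux --- that $b\in M[G][K_0]$ --- is discharged by ``following Mitchell's own fusion argument from \cite{mitchell2}'', which is the very source the lemma is being quoted from; as a proof this reduces to the citation the paper already gives. The second is architectural. A factor-by-factor descent, first through the $\omega_1$-closed (or, after Easton's Lemma, merely $\omega_1$-distributive) part $\mathbb{Q}_1$ over $M[G][K_0]$ and then through the Cohen part, cannot work as stated: over $M[G][K_0]$ the forcing $\mathbb{Q}_1$ is simply some highly closed poset, and your own $\mathrm{Add}(\omega_1,1)$ example shows that no amount of closure rules out a counterexample at that isolated step. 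The actual mechanism in Mitchell's proof is global: one shows that the \emph{product} of a nontrivial c.c.c.\ poset with an $\omega_1$-closed poset has the $\omega_1$-approximation property over $M[G]$, by building a binary fusion tree of conditions whose splitting is driven by the nontrivial Cohen coordinate, taking lower bounds along all branches via closure of the term forcing, applying the approximation hypothesis to the countable set of splitting ordinals to obtain decided values, and then invoking a chain condition on the square of the Cohen factor to produce compatible conditions forcing incompatible values of $\dot b$ on that countable set. The nontriviality of the Cohen factor is precisely what separates this situation from the bare $\mathrm{Add}(\omega_1,1)$ counterexample, and it never enters your outline; the appeal to ``the specific shape of $\mathbb{Q}_1$-conditions as names'' points at the wrong feature, since the term forcing is, in $M[G]$, an ordinary $\omega_1$-closed poset that does admit continuum many pairwise incompatible lower bounds for any fusion tree.
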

To see that $b$ satisfies the assumptions of the lemma, take any $x\in([\lambda]^\omega)^{M[G]}$ and $\xi\in b\backslash(\text{sup}(x)+1)$. Then clearly $$b\cap x=\{\alpha\in x\,|\,\{\alpha,\xi\}\text{ is well-connected in the color }i\}^{M[G]}$$
is an element of $M[G]$.

Observe finally that the $\Pi_1^1$-indescribability of a weakly compact $\lambda$ would have sufficed in place of the elementary embedding $j$ above; for more concrete argumentation in that setting, the reader is referred to the original \cite{mitchell1}.
\end{proof}
\begin{corollary}\label{strength} The consistency strength of the relation $\omega_2\to_{wc}(\omega_2)_\omega^2$ is exactly a weakly compact cardinal. \end{corollary}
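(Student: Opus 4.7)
The corollary is a consistency-strength equivalence, and my plan splits it into the two usual directions. The upper bound---that $\mathrm{Con}(\mathsf{ZFC} + \text{weakly compact cardinal})$ implies $\mathrm{Con}(\mathsf{ZFC} + \omega_2\to_{wc}(\omega_2)_\omega^2)$---is essentially immediate from Theorem \ref{Mitchell1}, which shows that Mitchell forcing over a ground model containing a weakly compact $\lambda$ produces an extension in which the relation holds. The real work is the matching lower bound.

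For the lower bound, I will assume $V\vDash\omega_2\to_{wc}(\omega_2)_\omega^2$ and argue that $\omega_2^V$ is weakly compact in $L$. First, I would apply the contrapositive of Lemma \ref{square}, strengthened as per Remark \ref{rmk} by the forthcoming Rinot-Lambie-Hanson work, to conclude that $V$ admits no $\square(\omega_2)$-sequence whatsoever. I would then invoke the Todor\v{c}evi\'{c}-Jensen theorem also recalled in Remark \ref{rmk}---that a regular uncountable cardinal $\mu$ indexes no $\square(\mu)$-sequence if and only if $\mu$ is weakly compact in $L$---to conclude that $\omega_2^V$ is weakly compact in $L$. Hence $L$ is a model of $\mathsf{ZFC}$ together with the existence of a weakly compact cardinal, supplying the desired lower bound.

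The main subtlety here is that Lemma \ref{square} as originally stated will not suffice: its additional requirement of stationarily many $C_\alpha$ of order-type $\omega$ corresponds, as the same remark notes, to only a Mahlo-level consistency strength, which would fall short of the target. What makes the sharp weakly-compact lower bound go through is precisely the reduction---from the existence of a $\square(\omega_2)$-sequence with a stationary set of $C_\alpha$ of order-type $\omega$, down to the existence of any $\square(\omega_2)$-sequence at all---achieved in the Rinot-Lambie-Hanson work cited in Remark \ref{rmk}. With that reduction in hand, the premise of Lemma \ref{square} matches exactly the Todor\v{c}evi\'{c}-Jensen characterization of weak compactness in $L$, closing the loop and pinning the consistency strength of $\omega_2\to_{wc}(\omega_2)_\omega^2$ at exactly a weakly compact cardinal.
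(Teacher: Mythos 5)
Your proposal is correct and follows essentially the same route as the paper, whose proof simply cites Theorem \ref{Mitchell1} for the upper bound and Remark \ref{rmk} for the lower bound; you have merely unpacked that remark, correctly identifying that the Rinot--Lambie-Hanson reduction of the premise of Lemma \ref{square} to an arbitrary $\square(\omega_2)$-sequence, together with the Todorcevic--Jensen characterization of weak compactness in $\mathrm{L}$, is what upgrades the lower bound from Mahlo to weakly compact.
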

\begin{proof} This follows immediately from Remark \ref{rmk} and Theorem \ref{Mitchell1}.
\end{proof}
Assuming the existence of a weakly compact cardinal $\nu>\mu$, the above readily adapts to show the consistency of $[(\mu^{++}\to_{wc}(\mu^{++})_\mu^2$ while $(\mu^{++}\not\to_{hc}(\mu^{++})_\mu^2]$. Clearly the associated variant of Corollary \ref{strength} will again follow as well.

We close with the question of whether the assumption that $\mu$ is regular is needed in Theorem 0.4. In most cases it is not; the obscurity concentrates in the question of $\mu^{+}\rightarrow_{*}(\mu)^2_{\lambda}$, where $\lambda=\text{cf}(\mu)<\mu$. Here as before, the arrow $\rightarrow_{*}$ condenses the three separate questions of $\to$, $\to_{hc}$, and $\to_{wc}$. In this sense, the first question is the following:
\begin{question} Under what assumptions does $\aleph_{\omega+1}\rightarrow_{*}(\aleph_\omega)^2_{\omega}$?
\end{question}

\textbf{Acknowledgements:} This work owes its existence (but none of its faults) to the very steady stimulus of conversations with Michael Hru\v{s}\'{a}k.

\end{document}